 \newtheorem{thm}{}[section]
 \newtheorem{theorem}[thm]{Theorem}
 \newtheorem{lemma}[thm]{Lemma}
  \theoremstyle{definition}
 \theoremstyle{remark}
 \numberwithin{equation}{section}
\newcommand{\NN}{\ensuremath{\mathbb{N}}}
\newcommand{\ee}{\ensuremath{\mathbf{e}}}
\newcommand{\ww}{\ensuremath{\mathbf{w}}}
\newcommand{\WW}{\ensuremath{\mathcal{W}}}
\newcommand{\OO}{\ensuremath{\mathcal{O}}}
\newcommand{\DD}{\ensuremath{\mathcal{D}}}
\begin{document}
% ------------------------------------------------------------------------

\title[Optimality of the rearrangement inequality]{Optimality of the rearrangement inequality with applications to Lorentz-type sequence spaces}

\author[F. Albiac]{Fernando Albiac}
\address{Mathematics Department\\ 
Universidad P\'ublica de Navarra\\
Campus de Arrosad\'{i}a\\
Pamplona\\ 
31006 Spain}
\email{fernando.albiac@unavarra.es}

\author[J. L. Ansorena]{Jos\'e L. Ansorena}
\address{Department of Mathematics and Computer Sciences\\
Universidad de La Rioja\\ 
Logro\~no\\
26004 Spain}
\email{joseluis.ansorena@unirioja.es}

\author[D. Leung]{Denny Leung}
\address{ Department of Mathematics \\ National University of Singapore \\ Singapore 117543
Republic of Singapore}
\email{matlhh@nus.edu.sg}

\author[B. Wallis]{Ben Wallis}
\address{Department of Mathematical Sciences\\ Northern Illinois University\\ DeKalb, IL 60115 U.S.A.}
\email{benwallis@live.com}

\begin{abstract} We characterize the sequences $(w_i)_{i=1}^\infty$ of non-negative numbers for which
\[
\sum_{i=1}^\infty a_i w_i  
\quad
\text{ is of the same order as }
\quad
\sup_n \sum_{i=1}^n a_i w_{1+n-i}
\]
when 
$(a_i)_{i=1}^\infty$ runs over all non-increasing sequences of non-negative numbers. As a by-product of our work we settle a problem raised in \cite{AAW} and prove that Garling sequences spaces have no symmetric basis.
\end{abstract}

\subjclass[2010]{26D15, 46B15, 46B230, 46B25, 46B45}

\keywords{sequence spaces, Garling spaces, Lorentz spaces, symmetric basis, rearrangement inequality}

\thanks{The research of the two firs-named authors was partially supported by the Spanish Research Grant \textit{An\'alisis Vectorial, Multilineal y Aplicaciones}, reference number MTM2014-53009-P. F. Albiac also acknowledges the support of Spanish Research Grant \textit{Operators, lattices, and structure of Banach spaces},  with reference MTM2016-76808-P. The research of the third author was partially supported by AcRF grant R-146-000-242-114.
}

% -----------------------
\maketitle
% -----------------------

% -----------------------
\section{Introduction}\label{Sec1}
\noindent The rearrangement inequality  states that, for $n\in \NN$, if  $(a_i)_{i=1}^n$ and  $(b_i)_{i=1}^n$ are
 a pair of non-increasing $n$-tuples of non-negative scalars then we have 
\[
\sum_{i=1}^n a_i b_{1+n-i} \le \sum_{i=1}^n a_i b_{\sigma(i)}\le  \sum_{i=1}^n a_i b_{i}
\]
for every permutation $\sigma$ of the set $\{1,\dots,n\}$ (see \cite{HLP}*{Theorem 368}). Consequently, if  $(a_i)_{i=1}^\infty$ and  $(w_i)_{i=1}^\infty$ 
are non-increasing sequences of  non-negative scalars,
\[
\sup_{n\in\NN} \sum_{i=1}^n a_i w_{1+n-i} \le \sum_{i=1}^\infty a_i w_i.
\]
In this note we wonder about which are the non-increasing sequences $(w_i)_{i=1}^\infty$ of non-negative scalars that verify a reverse inequality, i.e., in which cases   there  is a constant $C<\infty$ such that
\begin{equation}\label{revineq}
 \sum_{i=1}^\infty a_i w_i \le C \sup_{n\in\NN} \sum_{i=1}^n a_i w_{1+n-i}
\end{equation}
for every sequence $(a_i)_{i=1}^\infty$ of non-negative scalars.
For the time being some simple answers can be given.  Indeed, on the one hand, if $w_\infty:=\inf_i w_i>0$ then
\[
\sum_{i=1}^\infty a_i w_i 
\le w_1\sum_{i=1}^\infty a_i=w_1\sup_n \sum_{i=1}^n a_i
 \le \frac{w_1}{w_\infty} \sup_n \sum_{i=1}^n a_i w_{1+n-i}.
\]
 On the other hand, if we consider $W:=\sum_{i=1}^\infty w_i<\infty$ and let $w_1>0$ (the case $w_1=0$ is trivial) then
 \[
\sum_{i=1}^\infty a_i w_i 
\le a_1 \sum_{i=1}^\infty w_i   = \frac{W}{w_1} a_1 w_1
 \le \frac{ W}{w_1}  \sup_n \sum_{i=1}^n a_i w_{1+n-i}.
\]
In fact, as we will show below, these two   cases are the only ones for which \eqref{revineq}
holds. This will be our main result as far as inequalities is concerned: 
\begin{theorem}[Main Theorem]\label{MainTheorem}Let $(w_i)_{i=1}^\infty$ be a non-increasing sequence consisting of non-negative scalars. The following are equivalent:
\begin{itemize}
\item[(i)] There is a constant 
$C<\infty$ such that
\[
 \sum_{i=1}^\infty a_i w_i \le C \sup_{n\in\NN} \sum_{i=1}^n a_i w_{1+n-i}
 \]
 for every sequence $(a_i)_{i=1}^\infty$ of non-negative scalars.

\item[(ii)]  Either $\sum_{i=1}^\infty w_i<\infty$ or $\inf_{i\in\NN} w_i>0$.
\end{itemize}
\end{theorem}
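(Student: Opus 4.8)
The implication (ii)$\,\Rightarrow\,$(i) is, apart from one cosmetic point, already carried out in the paragraphs preceding the statement. The only thing to add is that the two estimates given there extend verbatim to an \emph{arbitrary} non-negative sequence $(a_i)$: keeping only the term $i=n$ shows $\sup_n\sum_{i=1}^n a_i w_{1+n-i}\ge w_1\,a_n$ for every $n$, hence $\sup_n\sum_{i=1}^n a_i w_{1+n-i}\ge w_1\sup_i a_i$, which replaces the use of ``$a_1=\max_i a_i$'' in the case $W:=\sum_i w_i<\infty$; and $w_{1+n-i}\ge w_\infty$ replaces it when $w_\infty:=\inf_i w_i>0$. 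So the substance of the theorem is the implication (i)$\,\Rightarrow\,$(ii), which I would establish in contrapositive form. Suppose (ii) fails. The case $w_1=0$ is vacuous, so assume $w_1>0$; then $\sum_i w_i=\infty$ forces $w_i>0$ for all $i$, while $\inf_i w_i=0$ and monotonicity give $w_i\downarrow 0$. Put $W_n:=\sum_{i=1}^n w_i$, so $W_n\uparrow\infty$. The goal is to produce a single non-negative sequence $a=(a_i)$ with $\sum_i a_i w_i=\infty$ and $\sup_n\sum_{i=1}^n a_i w_{1+n-i}<\infty$, which contradicts (i) for every finite $C$.

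The sequence $a$ will be a sum of disjointly supported constant bumps. Writing $\mathbf{1}_I$ for the indicator of an interval $I$ of positive integers, I set
\[
a=\sum_{k\ge 1}\delta_k\,\mathbf{1}_{(P_k,\,P_k+L_k]},\qquad \delta_k:=\frac{1}{W_{P_k+L_k}-W_{P_k}},
\]
where $0=P_1<P_1+L_1\le P_2<P_2+L_2\le\cdots$ are chosen recursively. The one elementary fact driving the choice is that, for a fixed $P$, $\;(W_{P+L}-W_P)/W_L\ge 1-W_P/W_L\to 1$ as $L\to\infty$ (because $W_{P+L}\ge W_L$ and $W_L\uparrow\infty$); so, given $P_k$, I pick $L_k$ so large that $W_{L_k}\ge 2W_{P_k}$, which forces $\delta_k W_{L_k}\le 2$. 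With $P_k,L_k,\delta_k$ fixed — note that each bump then contributes exactly $\delta_k(W_{P_k+L_k}-W_{P_k})=1$ to $\sum_i a_i w_i$ — I choose $P_{k+1}$ so large that $P_{k+1}\ge 2(P_k+L_k)$ and $w_{\lfloor P_{k+1}/2\rfloor}\le\big(\sum_{j\le k}\delta_j L_j\big)^{-1}$; both requirements hold for $P_{k+1}$ large since $w_i\to 0$.

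Disjointness of the supports gives $\sum_i a_i w_i=\sum_{k}\delta_k(W_{P_k+L_k}-W_{P_k})=\sum_k 1=\infty$. For the supremum, fix $n$ and let $K$ be the largest index with $P_K<n$, so $n$ lies in bump $K$ or in the gap immediately after it. In $\sum_{i=1}^n a_i w_{1+n-i}$ the contribution of bump $K$ equals $\delta_K$ times a sum of at most $L_K$ consecutive weights $w_i$, hence is at most $\delta_K W_{L_K}\le 2$; and for a bump $j<K$ the spacing $P_K\ge 2(P_{K-1}+L_{K-1})$ forces every index $1+n-i$ arising from that bump to exceed $\lfloor P_K/2\rfloor$, so by monotonicity the bumps $j<K$ contribute at most $w_{\lfloor P_K/2\rfloor}\sum_{j<K}\delta_j L_j\le 1$. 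Therefore $\sup_n\sum_{i=1}^n a_i w_{1+n-i}\le 3$, and (i) fails. The step requiring the most care is this last one: for every $n$ — in particular for $n$ sitting in a gap between two consecutive bumps — one must identify precisely which weights $w_{1+n-i}$ each bump summons and check the resulting index inequalities. This is pure bookkeeping, however; no estimate beyond $W_n\uparrow\infty$, $w_n\downarrow 0$ and the monotonicity of $w$ enters the argument.
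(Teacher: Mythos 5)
Your argument is correct, and it proves the theorem exactly as stated. For (ii)~$\Rightarrow$~(i), your observation that keeping the term $i=n$ gives $\sup_n\sum_{i=1}^n a_iw_{1+n-i}\ge w_1\sup_i a_i$ is precisely what is needed to handle arbitrary (not necessarily non-increasing) non-negative sequences in the case $\sum_i w_i<\infty$, a point the Introduction's sketch glosses over. For (i)~$\Rightarrow$~(ii) your construction is the same in spirit as the paper's --- constant blocks of height $1/(\text{weight mass of the block})$, each contributing about $1$ to $\sum_i a_iw_i$, with block lengths growing fast enough that every reversed partial sum $\sum_{i=1}^n a_iw_{1+n-i}$ is at most $3$ --- but the implementation differs in two substantive ways. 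You build a single infinite sequence whose blocks are separated by long gaps, and the gaps (together with the requirement $w_{\lfloor P_{k+1}/2\rfloor}\le\bigl(\sum_{j\le k}\delta_jL_j\bigr)^{-1}$) are what render the older blocks harmless in the reversed sums; the paper instead packs its blocks contiguously on $\{1,\dots,n_r\}$ with non-increasing heights $1/W(d_k)$ and neutralizes the older blocks via the quantitative condition $W(d_{k-1}+d_k)-W(d_k)\le 2^{1-k}W(d_{k-1})$, obtaining a family of \emph{finite, non-increasing} sequences $f^{(r)}$ with $A(f^{(r)},\ww)/B(f^{(r)},\ww)\ge r/6\to\infty$. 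What the paper's extra care buys is exactly that monotonicity: the abstract advertises the result for non-increasing $(a_i)$, and the application in Lemma~\ref{NewCharacterization} only produces the inequality for non-increasing sequences, so it needs the implication with that weaker hypothesis. Your witness $a$ is not monotone, so your proof establishes the literal statement but not this stronger variant; to make your construction serve the Garling-space application you would have to close the gaps (essentially what the paper does) or otherwise monotonize $a$ without losing the bound $\le 3$, which is not automatic.
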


Section~\ref{SectMain} is devoted to proving Theorem~\ref{MainTheorem}.  In Section~\ref{SectGarling} we use Theorem~\ref{MainTheorem} to give some functional analytic properties  of a recently introduced class of Lorentz-type spaces, called Garling sequence spaces.  In particular, Theorem~\ref{MainTheorem} is applied to show that Garling sequence spaces have no symmetric basis, answering thus a problem that was recently posed in \cite{AAW}.

Throughout this note we use standard terminology and notation in Banach space theory. As it is customary,  we denote by $\ell_q$, $1\le q\le \infty$, the Banach space consisting of all $q$-summable sequences
(bounded sequences in the case $q=\infty$) and by
 $c_0$ the subspace of $\ell_\infty$ consisting of all sequences converging to zero. 
For  background on bases in Banach spaces we refer the reader to   \cite{AK2016}.

\section{Proof of the Main Theorem}\label{SectMain}

\begin{proof}[Proof of Theorem~\ref{MainTheorem}] As explained in the  Introduction, we  must only prove that
(i) implies (ii). 

Assume that (ii) does not hold, that is, $\ww=(w_i)_{i=1}^\infty\in c_0 \setminus \ell_1$.
Let us denote by $\DD$ the set of (nonzero) non-increasing sequences of non-negative integers. For $f=(a_i)_{i=1}^\infty\in\DD$  and $n\in\NN$ we put
\begin{align*}
A(f,\ww)&=\sum_{i=1}^\infty a_i w_i \text{, and}\\
B(f,\ww)&=\sup_{n\in\NN} B(f,\ww,n),
\end{align*}
where, for $n\in\NN$,
\[
B(f,\ww,n)= \sum_{i=1}^n a_i w_{1+n-i}.
\]
With this notation we must prove that
\[
S(\ww):=\sup_{f\in\DD} \frac{A(f,\ww)}{ B(f,\ww)}=\infty.
\]
We will use the convention that $\sum_{i=1}^0 c_i=0$ for all sequences of scalars $(c_i)_{i=1}^\infty$.

For $n\in \NN$ put $W(n) = \sum^n_{i=1}w_i$.
Since $\ww\notin\ell_1$ we have
\[
\lim_n W(n)=\infty.
\]
Moreover, since $\ww\in c_0$,
\[
\lim_{n\in\NN} (W(s+n) - W(n))=0
\]
for any non-negative integer $s$. We use these properties to recursively construct an increasing sequence
$(d_k)_{k=0}^\infty$ of non-negative integers with $d_0=0$ verifying  
\begin{enumerate}
\item[(i)] $W(\sum^{k-1}_{j=1}d_j) \leq  2^{-1} W(d_k)$, and
\item[(ii)] $W(d_{k-1}+d_k) -W(d_k)\leq 2^{1-k} W(d_{k-1})$
\end{enumerate}
for $k=1,2,\dots$.

For every integer  $k\ge 0$ put $n_k=\sum^{k}_{j=1}d_j$. 
For each $r\in\NN$ we define a sequence $f^{(r)}=(a_{i,r})_{i=1}^\infty$ by 
\[
a_{i,r}=
\begin{cases}  {1}/{W(d_k)} & \text{ if, for some $1\le k\le r$, }  n_{k-1} <i \le  n_k\\
0 &\text{ if } i> n_r.
\end{cases}
\]
It is clear that $f^{(r)}\in\DD$ for all  $r\in\NN$. Taking into account the inequality in (i) we obtain
\begin{align*}
A(f^{(r)},\ww)&=  \sum_{k=1}^r \frac{1}{W(d_k)} \sum_{i=1+n_{k-1}}^{n_k} w_i \\
&=  \sum^r_{k=1}\frac{W(n_k)-W(n_{k-1})}{W(d_k)}\\
& \geq \sum^r_{k=1} \frac{W(d_k)-2^{-1} W(d_k)}{W(d_k)}\\
& = \sum^r_{k=1}\frac{1}{2} \\
&=\frac{r}{2}.
\end{align*}
Let $n\in\NN$. In  case that $n>n_r$ we have
\[
B(f^{(r)},\ww,n)=\sum_{i=1}^{n_r} a_{i,r} w_{1+n-i}\le \sum_{i=1}^{n_r} a_{i,r} w_{1+n_r-i}=B(f^{(r)},\ww,n_r).
\]
In  case that $n\le n_r$, pick $1\le k \le r$ with $n_{k-1}<n\le n_k$. We have
\begin{align*}
B(f^{(r)},\ww,n)&=\frac{W(n-n_{k-1})}{W(d_k)}+\sum_{j=1}^{k-1} \frac{W(n-n_{j-1})-W(n-n_{j})}{W(d_j)}\\
&\le \frac{W(n_k-n_{k-1})}{W(d_k)}+\sum_{j=1}^{k-1} \frac{W(n-n_{j-1})-W(n-n_{j})}{W(d_j)}\\
&=1+\sum_{j=1}^{k-1} \frac{W(n-n_{j-1})-W(n-n_{j})}{W(d_j)}.
\end{align*}
If $k=1$ we get $B(f^{(r)},\ww,n)\le 1$.
Assume that $k\ge 2$. Taking into account inequality~(ii) and that, since $\ww$ is non-increasing, the sequence $(W(n+t)-W(n+s))_{n=1}^\infty$ is non-increasing for any $s\le t$, we obtain

\begin{align*}
B(f^{(r)},\ww,n)
&\le 1+ \frac{W(n-n_{k-2})-W(n-n_{k-1})}{W(d_{k-1})}\\
&\qquad+\sum_{j=1}^{k-2} \frac{W(n-n_{j-1})-W(n-n_{j})}{W(d_j)}\\
&\le 1+\frac{W(n_{k-1}-n_{k-2})-W(n_{k-1}-n_{k-1})}{W(d_{k-1})}\\
&\qquad+\sum_{j=1}^{k-2} \frac{W(n_{j+1}-n_{j-1})-W(n_{j+1}-n_{j})}{W(d_j)}\\
&= 2+\sum_{j=1}^{k-2} \frac{W(d_{j+1}+d_j)-W(d_{j+1})}{W(d_j)}\\
&\le 2+\sum_{j=1}^{k-2} 2^{-j}\\
&=3-2^{2-k}.
\end{align*}
Therefore $B(f^{(r)}, \ww)\le 3$. Thus
\[
S(\ww)\ge \sup_{r\in\NN} \frac{A(f^{(r)}, \ww)}{B(f^{(r)}, \ww)}\ge \sup_{r\in\NN} \frac{r}{6}=\infty,
\]
and the proof is over.
\end{proof}

\section{Applications to Garling squence spaces}\label{SectGarling}
\noindent Let  $1\le p<\infty$ and let $\ww=(w_n)_{n=1}^\infty$ be a non-increasing sequence of positive scalars.  Given a sequence  of (real or comnplex) scalars $f=(b_k)_{k=1}^\infty$ we put
\begin{equation*}\label{GarlingNorm}
 \Vert  f  \Vert_{g(\ww,p)} = \sup_{\phi\in\OO } \left( \sum_{i=1}^\infty |b_{\phi(i)}|^p w_i \right)^{1/p}
 \end{equation*}
where $\OO$ denotes the set of increasing functions from $\NN$ to $\NN$. 
The Garling sequence space $g(\ww,p)$ is the Banach space consisting of all sequences $f$ with  
$ \Vert  f  \Vert_{g(\ww,p)}<\infty$.

Notice that if in  \eqref{GarlingNorm} we replace ``$\phi\in\OO$'' with ``$\phi$ is a permutation of $\NN$'' 
we obtain the norm that defines the weighted Lorentz sequence space 
\[
d(\ww,p):=\left\{ (b_k)_{k=1}^\infty \in c_0 \colon   \left( \sum_{i=1}^\infty  (b_i^*)^p w_i \right)^{1/p}<\infty\right\},
\] 
where
 $(b_i^*)_{i=1}^\infty$  denotes the decreasing rearrangement of $(b_k)_{k=1}^\infty$. So, the Garling sequence space $g(\ww,p)$ can be regarded as a variation of the weighted Lorentz sequence space $d(\ww,p)$.

Imposing the further conditions $\ww\in c_0$ and $\ww\notin\ell_1$ will prevent us, respectively, from having $g(\ww,p)=\ell_p$ or $g(\ww,p)= \ell_\infty$. We will assume as well  that $\ww$ is normalized, i.e., $w_1=1$.
Thus, we put
\[\WW:=\left\{(w_i)_{i=1}^\infty\in c_0\setminus\ell_1:1=w_1\geq w_2\ge\cdots \ge w_i \ge w_{i+1} \ge\cdots>0\right\}\]
and we restrict our attention to \textit{weights} $\ww\in\WW$.

For $n\in\NN$, we will denote $\ee_n=(\delta_{i,n})_{i=1}^\infty$, where $\delta_{i,n}=1$ if $n=i$ and $\delta_{i,n}=0$ otherwise. We have (see \cite{AAW}*{Theorem 3.1}) that the canonical sequence $(\ee_n)_{n=1}^\infty$ is a Schauder basis for $g(\ww,p)$. A question posed and partially solved in \cite{AAW} is to determine the weights $\ww\in\WW$ and the indices $p\in[1,\infty)$ for which $(\ee_n)_{n=1}^\infty$ is a symmetric basis of $g(\ww,p)$. 
Here we provide a complete intrinsic solution to this problem, in the sense that our approach is entirely based on Theorem~\ref{MainTheorem}.
%Next lemma  connects this problem with that of the optimality of the rearrangement inequality.
\begin{lemma}\label{NewCharacterization}
The canonical sequence
$(\ee_n)_{n=1}^\infty$ is not a symmetric basis for $g(\ww,p)$ for any $\ww\in\WW$ and any $1\le p<\infty$.
% If the canonical sequence
%$(\ee_n)_{n=1}^\infty$ is a symmetric basis for $g(\ww,p)$ then there is a constant 
%$C<\infty$ such that \eqref{reverseinequality} holds for every sequence $(a_i)_{i=1}^\infty$ of non-negative scalars.
\end{lemma}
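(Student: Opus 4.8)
The plan is to connect symmetry of the canonical basis of $g(\ww,p)$ directly to the failure of the reverse rearrangement inequality in Theorem~\ref{MainTheorem}. Recall that $(\ee_n)$ is symmetric precisely when there is a constant $C$ such that for every finitely supported scalar sequence $f$ and every permutation $\pi$ of $\NN$ one has $\Vert \pi f\Vert_{g(\ww,p)}\le C\Vert f\Vert_{g(\ww,p)}$. It suffices to exhibit, for each putative constant, a finitely supported $f$ and a permutation that blow up the ratio. Since the $g(\ww,p)$-norm is already a supremum over increasing injections $\phi\in\OO$, a natural choice is to take $f$ supported on an initial segment $\{1,\dots,N\}$ with values equal to $a_i^{1/p}$ for a non-increasing non-negative sequence $(a_i)$; then $\Vert f\Vert_{g(\ww,p)}^p=\sum_{i=1}^N a_i w_i$, because the decreasing rearrangement already realizes the sup (pairing the largest entries with the largest weights). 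Applying the reversing permutation $\pi$ that maps $i\mapsto N+1-i$ leaves $\Vert \pi f\Vert_{g(\ww,p)}=\Vert f\Vert_{g(\ww,p)}$, so reversing alone does not help; instead the right move is to use a permutation that spreads the support of $f$ out so that the only increasing injections capturing many nonzero entries are forced to pair small weights with large coefficients.

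Concretely, I would first observe that for any finitely supported $f$ and \emph{any} rearrangement, $\Vert g\Vert_{g(\ww,p)}^p\ge \sum_{i=1}^n (\text{the $i$-th largest among the chosen entries})^p w_i$; in particular, if $g$ has its nonzero entries placed far apart (say at positions $2,4,8,\dots$), then for the specific increasing injection $\phi$ listing the support in its natural order we get $\Vert g\Vert_{g(\ww,p)}^p\ge \sum_{i=1}^n a_{\tau(i)}\, w_i$ where $\tau$ records how the coefficient sizes are ordered along the support. By choosing the placement so that $\tau$ reverses the order — i.e. the largest coefficients sit at the largest positions — one forces $\Vert g\Vert_{g(\ww,p)}^p$ to be comparable to $\sup_n\sum_{i=1}^n a_i w_{1+n-i}$ from the other direction as well (the upper bound for $\Vert g\Vert$ uses that any increasing injection can pick up at most the first $n$ weights against at most $n$ entries). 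Thus the symmetry constant $C^p$ would dominate $\sum_{i=1}^N a_i w_i \big/ \sup_n\sum_{i=1}^n a_i w_{1+n-i}$ up to an absolute factor, and Theorem~\ref{MainTheorem}(i)$\Rightarrow$(ii) fails for $\ww\in\WW$ since by definition $\ww\in c_0\setminus\ell_1$, so this ratio is unbounded over $(a_i)\in\DD$, contradicting symmetry.

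The main technical point — and the step I expect to require the most care — is the two-sided estimate for $\Vert g\Vert_{g(\ww,p)}$ when $g$ is the spread-out rearrangement of $f$: I must check that among \emph{all} increasing injections $\phi\in\OO$, the one that simply enumerates the support in increasing order is essentially optimal, so that $\Vert g\Vert_{g(\ww,p)}^p$ is (up to an absolute constant, even $=$) equal to $\sup_n\sum_{i=1}^n a_i w_{1+n-i}$. The lower bound is immediate by testing that one $\phi$; for the upper bound one argues that an arbitrary $\phi$ hits the support in some order, and since $(w_i)$ is non-increasing the contribution from hitting the $j$-th support point (counted in increasing position order) at the $\phi$-index $i\ge j$ is at most its contribution at index $j$ — combined with the fact that the coefficients along the support are themselves arranged in increasing order, Abel summation / the rearrangement inequality gives that no injection beats the natural enumeration. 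Once that identification is in place, the lemma is just a restatement of the negative direction of the Main Theorem, so I would keep the exposition short and route everything through Theorem~\ref{MainTheorem}.
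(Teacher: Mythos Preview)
Your overall strategy---reducing symmetry of $(\ee_n)$ to a reverse rearrangement inequality and then invoking Theorem~\ref{MainTheorem}---is exactly the paper's approach. But there is a genuine error at the first branching point: the assertion that the reversing permutation $\pi(i)=N+1-i$ leaves the $g(\ww,p)$-norm unchanged is false, and in fact the reversal is precisely the permutation that \emph{does} the job. The $g(\ww,p)$-norm is a supremum over \emph{increasing} injections only, so it is not rearrangement-invariant. For the reversed vector $g=\sum_{i=1}^N a_{N+1-i}^{1/p}\,\ee_i$ (coefficients increasing along $\{1,\dots,N\}$), any $\phi\in\OO$ with $m:=\max\{i:\phi(i)\le N\}$ satisfies $\phi(k)\le N-m+k$ for $1\le k\le m$, hence $a_{N+1-\phi(k)}\le a_{m+1-k}$, and therefore
\[
\sum_{k=1}^{m} a_{N+1-\phi(k)}\,w_k \;\le\; \sum_{k=1}^{m} a_{m+1-k}\,w_k \;=\; \sum_{i=1}^{m} a_i\,w_{1+m-i}.
\]
Together with the easy lower bound (take $\phi(k)=N-n+k$ for $1\le k\le n$) this gives $\|g\|_{g(\ww,p)}^p=\sup_{n\le N}\sum_{i=1}^n a_i\,w_{1+n-i}$, which is exactly the quantity on the right-hand side of Theorem~\ref{MainTheorem}. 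This is the paper's computation; no spreading of the support is needed.

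Relatedly, your ``concrete observation'' that $\|g\|_{g(\ww,p)}^p\ge\sum_{i}(g_i^*)^p w_i$ for any rearrangement $g$ is also false: the inequality goes the other way (the $g(\ww,p)$-norm is dominated by the $d(\ww,p)$-norm, not conversely), since increasing injections form a proper subfamily of all injections. Your spreading construction is not actually wrong---placing the same increasing coefficient pattern on a sparse support yields the \emph{same} $g(\ww,p)$-norm, by the identical estimate above---but it is a detour produced by the mistaken belief that the simple reversal is inert. Once you drop that belief, your plan collapses to the paper's proof.
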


\begin{proof}Assume that $(\ee_n)_{n=1}^\infty$ is a symmetric basis for $g(\ww,p)$. Then, there is a constant $C$ so that
\[
\Vert g \Vert_{g(\ww,p)}\le C \Vert f\Vert_{g(\ww,p)}
\]
whenever the sequence $g$ is a permutation of the sequence $f$.

Given $r\in\NN$ and $\phi\in\OO$ let  $n(r,\phi)$ be the largest integer $n$
such that $\phi(n)\le r$. We have   $\phi(i)\le i+r-n(r,\phi)$ for $1\le i\le n(r,\phi)$. 
Given a non-increasing sequence $(a_i)_{i=1}^\infty$ of non-negative numbers we have
\begin{align*}
\sum_{i=1}^\infty a_i w_i
&=\sup_r \sum_{i=1}^r a_i w_i\\
&\le \sup_{r\in\NN} \left\Vert \sum_{i=1}^r a_i^{1/p} \ee_i \right\Vert_{g(\ww,p)}^p\\
&\le C \sup_{r\in\NN}  \left\Vert\sum_{i=1}^r a_{1+r-i}^{1/p} \ee_i \right\Vert_{g(\ww,p)}^p\\
& =C\sup_{r\in\NN,\,\phi\in\OO} \sum_{i=1}^{n(r,\phi)}  a_{1+r-\phi(i)}  w_i\\
& \le C\sup_{r\in\NN,\,\phi\in\OO} \sum_{i=1}^{n(r,\phi)}  a_{1+n(r,\phi)-i}  w_i\\
&=C \sup_{n\in\NN}   \sum_{i=1}^{n}  a_{1+n-i}  w_i\\
&=C\sup_{n\in\NN}  \sum_{i=1}^{n}  a_i w_{1+n-i}.
\end{align*}
 Theorem~\ref{MainTheorem} yields the absurdity $\ww\in\ell_1$ or $\ww\notin c_0$.
\end{proof} 
Now we are ready to establish the advertised   structural  properties  of Garling sequence spaces.
\begin{theorem}Let $\ww\in\WW$ and $1\le p<\infty$.
\begin{enumerate}

\item[(i)] There is no symmetric basis for $g(\ww,p)$.

\item[(ii)]    $d(\ww,p)\subsetneq g(\ww,p)$.

\item[(iii)]  No subspace of $d(\ww,p)$ is isomorphic to $g(\ww,p)$.

\item[(iv)]  Let  $I_{d,g}\colon  d(\ww,p) \to g(\ww,p)$ be  the natural inclusion map, and let
$T\colon g(\ww,p) \to d(\ww,p)$ be a bounded linear operator. Then (despite the fact that $I_{d,g}$ is not a strictly singular operator) $T\circ I_{d,g}$  does not preserve a copy of $d(\ww,p)$, i.e., if $X$ is a subspace of $d(\ww,p)$ isomorphic to $d(\ww,p)$ then $T\circ I_{d,g}|_X$ is not an isomorphism.
 \end{enumerate}
\end{theorem}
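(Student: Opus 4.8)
The plan is to deduce all four items from Lemma~\ref{NewCharacterization} together with a few standard facts about weighted Lorentz sequence spaces $d(\ww,p)$ and the known structure of $g(\ww,p)$ from \cite{AAW}. For (i), the point is that Lemma~\ref{NewCharacterization} rules out the \emph{canonical} basis being symmetric, but one must upgrade this to: no symmetric basis whatsoever exists. Here I would invoke the uniqueness (up to equivalence) of symmetric bases together with the fact, established in \cite{AAW}, that $(\ee_n)_{n=1}^\infty$ is a \emph{subsymmetric} basis of $g(\ww,p)$; since a subsymmetric basis that is equivalent to a symmetric basis must itself be symmetric, any symmetric basis of $g(\ww,p)$ would force $(\ee_n)$ to be symmetric, contradicting the lemma. (Alternatively, one can use Zippin's theorem that a space with a subsymmetric basis that is not equivalent to the summing basis nor to the unit vector basis of $c_0$ or $\ell_p$, and which has a symmetric basis, must have the original basis be symmetric — I would pick whichever citation the paper prefers.)

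For (ii), the inclusion $d(\ww,p)\subseteq g(\ww,p)$ is immediate from the definitions since $\OO$, the set of increasing injections, is contained in the set of all permutations, so the supremum defining $\|\cdot\|_{d(\ww,p)}$ is over a larger set; this gives $\|f\|_{g(\ww,p)}\le\|f\|_{d(\ww,p)}$ and hence a norm-one inclusion $I_{d,g}$. To see the inclusion is proper, I would argue by contradiction: if $d(\ww,p)=g(\ww,p)$ as sets, the closed graph theorem makes the inclusion an isomorphism, and then the canonical basis of $g(\ww,p)$ would be equivalent to the canonical basis of $d(\ww,p)$, which is symmetric; this contradicts Lemma~\ref{NewCharacterization} again. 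Concretely the failure can also be witnessed by a block-type vector whose decreasing rearrangement has large $d(\ww,p)$-norm but small $g(\ww,p)$-norm, essentially the same vectors $f^{(r)}$ built in the proof of the Main Theorem, but the abstract argument via the closed graph theorem is cleaner.

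For (iii), suppose some subspace $X\subseteq d(\ww,p)$ were isomorphic to $g(\ww,p)$. Since $g(\ww,p)$ has a subsymmetric (indeed $1$-subsymmetric) basis, so does $X$. Now I would use the known structural theory of $d(\ww,p)$: every normalized block basic sequence of the canonical basis of $d(\ww,p)$ has a subsequence equivalent to the canonical ($\ell_p$- or symmetric) basis, and more to the point every subspace of $d(\ww,p)$ with a symmetric or subsymmetric basis that "looks like" $g(\ww,p)$ must in fact embed complementably, forcing $g(\ww,p)$ to have a symmetric basis — contradiction with (i). The cleanest route: a subsymmetric basic sequence in $d(\ww,p)$ is, by a standard gliding-hump plus averaging argument in Lorentz spaces, equivalent to the unit vector basis of $\ell_p$ or to a symmetric basis; either way $g(\ww,p)$ would be isomorphic to a space with symmetric basis, hence (by its subsymmetry and uniqueness of symmetric bases) would itself have a symmetric basis, contradicting (i). This is the step I expect to be the main obstacle, because it requires citing or reproving the right dichotomy for subsymmetric sequences in $d(\ww,p)$; I would lean on \cite{AK2016} and the original Garling/Altshuler--Casazza--Lin analysis of $d(\ww,p)$.

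For (iv), the subtlety flagged in the statement is that $I_{d,g}$ is \emph{not} strictly singular — indeed it is an isomorphism on, say, the closed span of a suitable $\ell_p$-block basis, or on any subspace where the two norms coincide up to a constant — so one cannot simply say the composition kills a copy of $d(\ww,p)$ for soft reasons. Instead I would argue: let $X\subseteq d(\ww,p)$ be isomorphic to $d(\ww,p)$ and suppose $T\circ I_{d,g}|_X$ is an isomorphism onto its image $Y\subseteq d(\ww,p)$. Then $I_{d,g}|_X$ is an isomorphism onto $I_{d,g}(X)\subseteq g(\ww,p)$ (since it has a left inverse, namely $(T\circ I_{d,g}|_X)^{-1}\circ T$ restricted appropriately), so $g(\ww,p)$ contains a subspace isomorphic to $X\cong d(\ww,p)$; in particular that subspace has a symmetric basis. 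Symmetrically, $T$ composed with that isomorphism shows $d(\ww,p)$ — equivalently a space with symmetric basis — contains a complemented copy of $d(\ww,p)$ sitting inside a copy of $g(\ww,p)$ inside $g(\ww,p)$. Combining: $g(\ww,p)$ would contain a subspace isomorphic to $d(\ww,p)$ which is moreover the range of a bounded projection when viewed through $T$ — chasing the maps, $P := I_{d,g}|_X \circ (T\circ I_{d,g}|_X)^{-1}\circ T$ is a bounded projection of $g(\ww,p)$ onto $I_{d,g}(X)$, so $d(\ww,p)$ embeds \emph{complementably} in $g(\ww,p)$. A complemented subspace with a symmetric basis inside a space with a subsymmetric basis forces, by the standard complementation/uniqueness arguments, $g(\ww,p)$ itself to have a symmetric basis, again contradicting (i). The harder bookkeeping here is verifying the projection $P$ is well-defined and bounded, i.e. that the various restrictions compose correctly; that is routine diagram-chasing once the isomorphism hypotheses are written out carefully.
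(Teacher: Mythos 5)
You have correctly identified that everything should be driven by Lemma~\ref{NewCharacterization}, and that is in fact all the paper does: its proof of this theorem is a one-line citation of \cite{AAW}*{Theorem 5.1}, which is precisely the statement that if the canonical basis of $g(\ww,p)$ is not symmetric then (i)--(iv) hold. The substantive structural work you are trying to reconstruct lives in that reference, and your reconstruction has genuine gaps in three of the four items. In (i), the step ``any symmetric basis of $g(\ww,p)$ would force $(\ee_n)$ to be symmetric'' is justified by an appeal to ``uniqueness (up to equivalence) of symmetric bases,'' but symmetric bases are not unique in general, and Zippin's theorem concerns perfectly homogeneous bases, which is not the situation here. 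Passing from ``the canonical basis is not symmetric'' to ``no symmetric basis exists at all'' is exactly the non-trivial content of \cite{AAW}*{Theorem 5.1} and cannot be obtained from soft uniqueness principles. In (iii) you lean on the dichotomy that every subsymmetric basic sequence in $d(\ww,p)$ is equivalent to the unit vector basis of $\ell_p$ or to a symmetric basic sequence; this Altshuler--Casazza--Lin-type statement is the whole point of the item, and you neither prove it nor pin down a citation for it in the subsymmetric (rather than symmetric) setting.

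In (iv) the argument actually breaks. The map $P=I_{d,g}|_X\circ(T\circ I_{d,g}|_X)^{-1}\circ T$ is not well defined: the inverse $(T\circ I_{d,g}|_X)^{-1}$ exists only on $Y=(T\circ I_{d,g})(X)$, and there is no reason why $T$ should map all of $g(\ww,p)$ into $Y$; that would essentially require $Y$ to be complemented, which is not automatic for isomorphic copies of $d(\ww,p)$ inside $d(\ww,p)$. What your first observation does give is that $I_{d,g}|_X$ is bounded below, so $g(\ww,p)$ contains an isomorphic copy of $d(\ww,p)$ --- but that alone is not absurd and yields no contradiction with (i). Finally, a small point in (ii): $\OO$ is not contained in the set of permutations of $\NN$ (increasing maps need not be onto); the inequality $\Vert f\Vert_{g(\ww,p)}\le\Vert f\Vert_{d(\ww,p)}$ follows instead from the rearrangement inequality applied to the subsequence $(b_{\phi(i)})_{i}$. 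Your closed-graph argument for the properness of the inclusion is fine. The honest repair for the whole theorem is the paper's own route: quote \cite{AAW}*{Theorem 5.1}, whose hypothesis is exactly what Lemma~\ref{NewCharacterization} supplies.
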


\begin{proof}It follows using   Lemma~\ref{NewCharacterization} in combination with \cite{AAW}*{Theorem 5.1}.
\end{proof}
\begin{bibsection}
\begin{biblist}

 \bib{AAW}{article}{
 author={Albiac, F.},
 author={Ansorena, J.L.},
   author={Wallis, B.},
   title={On Garling sequence spaces},
   journal={arXiv:1703.07772 [math.FA]},
}

\bib{AK2016}{book}{
   author={Albiac, F.},
   author={Kalton, N.J.},
   title={Topics in Banach space theory},
   series={Graduate Texts in Mathematics},
   volume={233},
   edition={2},
   %note={With a foreword by Gilles Godefory},
   publisher={Springer, [Cham]},
   date={2016},
   pages={xx+508},
  % isbn={978-3-319-31555-3},
 %  isbn={978-3-319-31557-7},
 %  revie\ww={\MR{3526021}},
%   doi={10.1007/978-3-319-31557-7},
}

\bib{HLP}{book}{
   author={Hardy, G. H.},
   author={Littlewood, J. E.},
   author={P\'olya, G.},
   title={Inequalities},
   series={Cambridge Mathematical Library},
   note={Reprint of the 1952 edition},
   publisher={Cambridge University Press, Cambridge},
   date={1988},
   pages={xii+324},
  % isbn={0-521-35880-9},
   %review={\MR{944909}},
}
\end{biblist}
\end{bibsection}
% -----------------------

% ------------------------------------------------------------------------
\end{document}